\renewcommand {\a}{ \alpha }
\newcommand{\vare}{\varepsilon}
\newcommand{\g}{\gamma}
\newcommand{\varf}{\varphi}
\renewcommand{\d}{\delta}
\newcommand{\D}{\Delta}
\newcommand{\s}{\sigma}
\newcommand{\Sg}{\Sigma}
\renewcommand{\l}{\lambda}
\newcommand{\z}{\zeta}
\newcommand{\vart}{\vartheta}
\newcommand{\om}{\omega}
\newcommand{\R}{ \mathbb R}
\newcommand{\N}{ \mathbb N}
\newcommand{\Z}{ \mathbb Z}
\newcommand{\CF}{\mathcal F}
\newcommand{\CG}{\mathcal G}
\newcommand{\CH}{\mathcal H}
\newcommand {\GH}{\mathfrak H}
\newcommand {\GS}{\mathfrak S}
\newcommand {\bh}{\mathbf h}
\newcommand {\bx}{\mathbf x}
\newcommand {\by}{\mathbf y}
\newcommand {\bb}{\mathbf b}
\newcommand {\BA}{\mathbf A}
\newcommand {\BB}{\mathbf B}
\newcommand {\BH}{\mathbf H}
\newcommand {\BT}{\mathbf T}
\newcommand{\gz}{\mathfrak z}
\newcommand{\wt}{\widetilde}
\newcommand{\wh}{\widehat}
\DeclareMathOperator{\dom}{Dom} 
\DeclareMathOperator{\re}{Re}
\newtheorem{thm}{Theorem}[section]
\newtheorem{lem}[thm]{Lemma}
\newtheorem{prop}[thm]{Proposition}
\theoremstyle{definition}
\theoremstyle{remark}
\newtheorem{rem}[thm]{Remark}
\numberwithin{equation}{section}
\newcommand{\thmref}[1]{Theorem~\ref{#1}}
\newcommand{\sh}{Schr\"odinger }
\newcommand{\vs}{\vskip0.2cm}
\newcommand{\loc}{\rm{loc}}
\newcommand{\w}{\infty}
\begin{document}

\title[Schr\"odinger operator in $2D$ with radial potential]{On the negative spectrum of the two-dimensional Schr\"odinger operator with radial potential}

\author[A. Laptev]{A. Laptev}
\address{Department of Mathematics\\ Imperial College London\\ Huxley Building\\
180 Queen's Gate\\ London SW7 2AZ, UK}
\email{a.laptev@imperial.ac.uk}
\author[M. Solomyak]{M. Solomyak}
\address{Department of Mathematics
\\ Weizmann Institute\\ Rehovot\\ Israel}
\email{michail.solomyak@weizmann.ac.il}

\subjclass[2010] {35J10; 35P20}
\keywords{\sh operator on $\R^2$; estimates on the number of bound states}

\begin{abstract}
For a two-dimensional \sh operator $\BH_{\a V}=-\D-\a V$ with the radial potential $V(x)=F(|x|),
F(r)\ge 0$, we study the behavior of the number $N_-(\BH_{\a V})$ of its negative eigenvalues, as the coupling parameter $\a$ tends to infinity. We obtain the necessary and sufficient conditions for the semi-classical growth $N_-(\BH_{\a V})=O(\a)$ and for the validity of the Weyl asymptotic law.
\end{abstract}

\maketitle

\section{Introduction}\label{intro}
Let $\BH_{\a V}$ stand for the \sh operator
\begin{equation}\label{sh}
    \BH_{\a V}=-\D-\a V
\end{equation}
on $\R^d$. We suppose that $V\ge 0$, and $\a>0$ is the coupling constant.

 We write $N(-\g^2,\BH_{\a V})$ for the number of eigenvalues of $\BH_{\a V}$ ({\it bound states}),
lying on the left of the point $\l=-\g^2,\ \g\ge0$, and we denote
\begin{equation}\label{notat}
    N_-(\BH_{\a V})=N(0,\BH_{\a V}).
\end{equation}
If $d\ge3$, the function \eqref{notat} obeys the celebrated Cwikel-Lieb-Rozenblum estimate
\begin{equation}\label{rlc}
    N_-(\BH_{\a V})\le C(d)\a^{d/2}\int_{\R^d}V^{d/2}dx, \ \forall\a>0,\qquad d\ge3,
\end{equation}
under the sole condition that the integral on the right is finite. This estimate is accompanied by
the Weyl-type asymptotic formula
\begin{equation}\label{weyl}
    \lim_{\a\to\infty}\a^{-d/2}N(-\g^2,\BH_{\a V})=(2\pi)^{-d}\om_d\int_{\R^d}V^{d/2}dx,\ \qquad \g\ge0,\
    d\ge 3,
\end{equation}
under the same assumption $V\in L^{d/2}$. In \eqref{weyl} $\om_d$ stands for the volume of the $d$-dimensional unit ball. Comparing the estimate \eqref{rlc} and the asymptotics \eqref{weyl}, we see that the estimate has the correct (semi-classical) order $O(\a^{d/2})$ in the large coupling constant regime.

In the one-dimensional case it is convenient to consider the \sh operator on the half-line $\R_+$:
\begin{equation}\label{1d}
    \BH_{\a G}\varf=-\varf''-\a G\varf,\ \varf(0)=0.
\end{equation}
For this case, an exhaustive description of the potentials guaranteeing the semi-classical
behavior $N_-(\BH_{\a G})=O(\a^{1/2})$ is also known \cite{NS}, though it is expressed in somewhat more complicated terms. A simple sufficient condition for such behavior was obtained as far back as in 1974, see \cite{BSlect}, \S4.8.
The direct analogue of \eqref{rlc} is valid only for the
monotone potentials $V$ (Calogero estimate, see e.g. \cite{ReedSim4}).

The borderline case $d=2$ turns out to be much more difficult, and the exhaustive description of the potentials $V$ guaranteeing the semi-classical behavior
$N_-(\BH_{\a,V})=O(\a)$ is unknown till now. Several attempts to describe such class of potentials, see \cite{BL,LN,S-dim2}, lead only to some, wide enough, sufficient conditions.
Among other papers, devoted to the 2D-case, we would like to mention \cite{MW,St,mv}, though the estimates obtained there have an incorrect order in the large coupling constant regime. Note
that in \cite{mv} the reader finds also a short survey of other relevant results.

The situation changes if we consider the radially symmetric potentials, $V(x)=F(|x|).$
The first result in this direction was obtained in \cite{Lap}. It applies to the operators
\begin{equation*}
    \BH_{b,\a V}=-\D+b|x|^{-2}-\a V,\ \a>0,
\end{equation*}
where $b>0$ is an arbitrary constant, and states that if $d=2$ and $V(x)=F(|x|)\ge 0$, then
\begin{equation}\label{laprad}
    N_-(\BH_{b,\a V})\le A(b)\a \int_0^\infty r F(r)dr,
\end{equation}
with an explicitly given constant factor $A(b)$.
It is well-known that in $\R^2$ any negative potential generates at least one negative bound state, and therefore, a similar estimate for $b=0$ cannot be valid.

Three years later, in the paper \cite{cha}, the following important estimate was obtained for the Hamiltonian \eqref{sh}:
if $d=2$ and $V(x)=F(|x|)$, then
\begin{equation}\label{chad}
    N_-(\BH_{\a V})\le 1+\a\int_0^\infty rF(r)\bigl|\ln\frac{r}{R}\bigr|dr+\frac2{\sqrt3}\a \int_0^\infty rF(r)dr.
\end{equation}
Here $R>0$ can be taken arbitrary, and one can minimize the estimate with respect to $R$. The results in \cite{Lap, cha} are formulated for $\a=1$, but the general case follows immediately by
the substitution $F\mapsto\a F$.
The proofs in both papers make use of the Lieb-Thirring estimate for operators in dimension one.

In a standard way, the estimate \eqref{chad} allows one to justify the asymptotic formula \eqref{weyl} for the radial potentials having the finite integrals involved in \eqref{chad}. For the 2-dimensional case and $\g=0$ this formula takes the form
\begin{equation}\label{as2}
    \lim_{\a\to\infty}\a^{-1}N_-(\BH_{\a V})=\frac1{4\pi}\int_{\R^2} Vdx=\frac12\int_0^\infty rF(r)dr.
\end{equation}
So, the condition that the integrals in \eqref{chad} are finite is sufficient both for the semi-classical growth $N_-(\BH_{\a V})=O(\a)$ and for the validity of the Weyl asymptotics \eqref{as2}. However, it is not necessary for either of these properties.
\vs

The main goal of this paper is to establish the necessary and sufficient conditions in these both problems.
It turns out that they cannot be formulated in terms of $L_1$-spaces with weights, as in \eqref{chad}, but require the so-called "weak" $\ell_1$-space, usually denoted by $\ell_{1,\w}$; see, e.g.,
\cite{BeL}, \S 1.3, for more detail. It is important that the conditions for $N_-(\BH_{\a V})=O(\a)$ and for the validity of \eqref{as2} differ from each other. This is in contrast with the case of $d\ge3$, where the condition $V\in L_{d/2}(\R^d)$ is necessary and sufficient both for the estimate \eqref{rlc} and for the asymptotics \eqref{weyl}. In this respect we would like to notice that the existence of potentials on $\R^2$, for which $N_-(\BH_{\a V})=O(\a)$ but the formula \eqref{as2} fails, had been discovered in \cite{BL}, see Theorem 5.1 there.\vs

Below we remind the definition of the classes $\ell_{1,\w}$.
Consider a sequence of real numbers $\bx=\{x_n\}, \ n=1,2,\ldots$.
One says that this sequence belongs to $\ell_{1,\w}$, if
\begin{equation*}
    \|\bx\|_{1,\w}:=\sup_{\vare>0}\bigl(\vare\#\{n:|x_n|>\vare\}\bigr)
    <\infty.
\end{equation*}
This is a linear space, and the functional $\|\cdot\|_{1,\w}$ defines a quasinorm in it. The latter means that, instead of the standard triangle
inequality, this functional meets a weaker property:
\[  \|\bx+\by\|_{1,\w}\le c\bigl(\|\bx\|_{1,\w}+\|\by\|_{1,\w}\bigr),\]
with some constant $c$ that does not depend on the sequences $\bx,\by$. This quasinorm defines a topology in $\ell_{1,\w}$; there is no norm compatible with this topology. An equivalent, and probably more transparent description of $\|\bx\|_{1,\w}$ can be given by the following property: given a sequence $\{x_n\}$, let $\{x^*_n\}$ stand for the non-increasing rearrangement of the sequence $\{|x_n|\}$. Then
\begin{equation*}
    \|\bx\|_{1,\w}=\sup_n (nx^*_n).
\end{equation*}
So, a sequence $\bx$ belongs to the space $\ell_{1,\w}$
if and only if the non-increasing rearrangement of its absolute values decays as $O(n^{-1})$.

The space $\ell_{1,\w}$ is non-separable. Consider its closed subspace $\ell_{1,\w}^\circ$ in which the sequences $\bx$
with only a finitely many non-zero terms form a dense subset. This subspace is separable, and its elements are
characterized by either of the equivalent conditions
\begin{equation*}
    \bx\in\ell_{1,\w}^\circ \ \Longleftrightarrow\ \vare\#\{n:|x_n|>\vare\}\to0,\ \vare\to0;\qquad x^*_n=o(n^{-1}).
\end{equation*}
It is clear that $\ell_1\subset \ell_{1,\w}^\circ$ and
\begin{equation}\label{compar}
    \|\bx\|_{1,\w}\le\|\bx\|_{\ell_1}=\sum_n|x_n|.
\end{equation}
The (non-linear) functionals
\begin{gather}
    \D_1(\bx)=\limsup_{\vare\to0}(\vare\#\{n:|x_n|>\vare\})=\limsup_{n\to\infty}(nx_n^*),\label{Delta}\\
    \d_1(\bx)=\liminf_{\vare\to0}(\vare\#\{n:|x_n|>\vare\})=\liminf_{n\to\infty}(nx_n^*)\label{delta}
\end{gather}
are well-defined on the space $\ell_{1,\w}$, and clearly,
\[ \d_1(\bx)\le \D_1(\bx)\le \|\bx\|_{1,\w}.\]
It is also clear that $\ell_{1,\w}^\circ=\{\bx\in\ell_{1,\w}:\D_1(\bx)=0\}.$ Note that the values $\D_1(\bx),\d_1(\bx)$ do not change if we remove from $\bx$ any finite number of terms.

To formulate our main results on the estimates for the function $N_-(\BH_{\a V})$ and on the Weyl asymptotics, we need some more notation. Denote $D_0=(0,1)$ and $D_k=(e^{k-1},e^k)$ for $k\in\N$. Given a locally integrable function $G(t)\ge0$ on the half-line $\R_+$, we define the sequence
\begin{equation}\label{def}
    \gz(G)=\{\z_k(G)\}_{k\ge0}:\qquad \z_0(G)=\int_{D_0}G(t)dt,\qquad \z_k(G)=\int_{D_k}tG(t)dt\  (k\in\N).
\end{equation}
If $G$ is defined on the whole line $\R$, we consider
\begin{equation}\label{sum}
   \wh{ \gz}(G)=\{\wh{\z_k}(G)\}_{k\ge0};\quad
     \wh{\z_0}(G)=\int_{-1}^1G(t)dt, \quad
   \wh{\z_k}(G)=\int_{|t|\in D_k}|t|G(t)dt\quad  (k\in\N).
   \end{equation}

\begin{thm}\label{main}
Let $d=2$ and $V(x)=F(|x|)\ge0$. Define an auxiliary one-dimensional potential
\begin{equation}\label{aux}
    G_F(t)=e^{2|t|}F(e^t),\qquad  t\in\R,
\end{equation}
and let $\wh{\gz}(G_F)$ be the corresponding sequence \eqref{sum}. Then
$N_-(\BH_{\a V})=O(\a)$ if and only if $V\in L_1(\R^2)$ and $\wh{\gz}(G_F)\in\ell_{1,\w}$. Under these two assumptions the estimate is satisfied:
\begin{equation}\label{estweak}
    N_-(\BH_{\a V})\le 1+\a\left(\int_0^\infty r F(r)dr +C\|\wh{\gz}(G_F)\|_{1,\w}\right),
\end{equation}
with some constant $C$ independent on $F$.
\end{thm}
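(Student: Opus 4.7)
The plan is to reduce the problem to one-dimensional ones via an angular-momentum decomposition and to treat the $m=0$ and $m\neq 0$ sectors by different methods. Expanding $L^2(\R^2)=\bigoplus_{m\in\Z}\CH_m$ into Fourier modes in the angular variable, the radiality of $V$ makes each $\CH_m$ invariant for $\BH_{\alpha V}$, so $N_-(\BH_{\alpha V})=\sum_m N_-(H_m)$, where on $\CH_m$ the operator acts as the half-line expression $H_m=-\partial_r^2-r^{-1}\partial_r+m^2/r^2-\alpha F$ on $L^2(\R_+, r\,dr)$. The aim is to obtain separately $\sum_{m\neq 0}N_-(H_m)\le \alpha\int_0^\infty rF(r)\,dr$ and $N_-(H_0)\le 1+C\alpha\|\wh\gz(G_F)\|_{1,\w}$, which add up to \eqref{estweak}.

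For the sectors with $m\neq 0$ the centrifugal repulsion $m^2/r^2\ge 1/r^2$ allows the use of Laptev's inequality \eqref{laprad} with $b=1$ on the restriction of $\BH_{1,\alpha V}$ to $\bigoplus_{m\neq 0}\CH_m$, the extra $1/r^2$ being absorbed by the existing centrifugal term. A direct alternative is a partial-wave Bargmann-type estimate $N_-(H_m)\le (2|m|)^{-1}\alpha\int rF\,dr$, which forces $N_-(H_m)=0$ for $|m|>\tfrac12\alpha\int rF\,dr$ and thus makes the sum over $m\neq 0$ finite; either route yields the asserted control $\sum_{m\neq 0}N_-(H_m)=O(\alpha\|V\|_{L_1(\R^2)})$.

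For the $s$-wave $H_0$ the central step is the logarithmic substitution $r=e^t$, $\phi(t)=\psi(e^t)$, under which the quadratic form of $H_0$ becomes
\[
\int_\R |\phi'(t)|^2\,dt - \alpha\int_\R e^{2t}F(e^t)\,|\phi(t)|^2\,dt
\]
on the weighted space $L^2(\R, e^{2t}\,dt)$. To recover the symmetric potential $G_F(t)=e^{2|t|}F(e^t)$ of the statement I decouple the regions $r<1$ and $r>1$ by Neumann bracketing across the unit circle (the cost of which is the additive $+1$ in \eqref{estweak}) and then apply the conformal inversion $r\mapsto 1/r$ on the interior piece, which in the logarithmic variable is the reflection $t\mapsto-t$; the two resulting half-line problems then assemble into a single weighted problem on $[0,\infty)$ with potential $G_F$ understood as a function of $|t|$. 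A one-dimensional weak-type estimate in the spirit of Birman--Solomyak/Naimark--Solomyak (cf.\ \cite{BSlect, NS}), proved by a local Bargmann bound on each annulus $D_k$ combined with the quasi-triangle property of $\|\cdot\|_{1,\w}$, then delivers $N_-(H_0)\le 1+C\alpha\|\wh\gz(G_F)\|_{1,\w}$. The necessity of both conditions for $N_-(\BH_{\alpha V})=O(\alpha)$ is obtained by the dual construction: $V\in L_1$ is forced by a Weyl-type lower bound, while $\wh\gz(G_F)\notin\ell_{1,\w}$ is contradicted by producing one local trial function per annulus $\{|t|\in D_k\}$ on which $\wh{\z_k}(G_F)$ is large, and ranking them by the non-increasing rearrangement characterization $\|\bx\|_{1,\w}=\sup_n(nx^*_n)$.

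The principal obstacle is the 1D weak-type estimate for $H_0$: a naïve $\ell_1$-summation of the annular contributions only reproduces Chadan's inequality \eqref{chad}, and getting down to the weaker $\ell_{1,\w}$ quasi-norm requires ordering the $\wh{\z_k}(G_F)$'s by size and exploiting precisely the structure $\sup_n(nx^*_n)$ rather than $\sum_n x^*_n$. A secondary difficulty is tracking the matching at $r=1$ between the Neumann-bracketed pieces, which accounts for the benign additive constant $1$ in \eqref{estweak} but must be done with care so as not to contaminate the $\ell_{1,\w}$-coefficient.
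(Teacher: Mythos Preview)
Your strategy coincides with the paper's: split into the $m=0$ and $m\neq0$ angular sectors (the paper's $\CH_0^1\oplus\CH_1^1$), pass to the logarithmic variable $t=\ln r$, and invoke a one-dimensional weak-$\ell_1$ estimate for the radial part. Two points need correction.

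First, the ``direct alternative'' for $m\neq0$ via the partial-wave Bargmann bound $N_-(H_m)\le(2|m|)^{-1}\a J(F)$ does \emph{not} give $O(\a)$. Summing over $1\le|m|\le M$ with $M\asymp\a J(F)$ yields $\a J(F)\sum_{m=1}^{M}m^{-1}\asymp\a\log\a$. The paper instead observes that after the substitution $r=e^t$ one has $N_-(H_m)=\#\{k:\mu_k>m^2\}$ with $-\mu_k$ the eigenvalues of $-\partial_t^2-\a G_F$ on $\R$, so that $\sum_{m\neq0}N_-(H_m)\le2\sum_k\mu_k^{1/2}$; the sharp Lieb--Thirring inequality of Hundertmark--Lieb--Thomas then gives $\sum_k\mu_k^{1/2}\le\tfrac12\a\int_\R G_F\,dt=\tfrac12\a J(F)$, producing exactly the term $\a\int_0^\infty rF(r)\,dr$ in \eqref{estweak}. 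Your first route via \eqref{laprad} does give $O(\a)$, but with the constant $A(1)$ rather than $1$, so it does not yield \eqref{estweak} as stated.

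Second, for the $m=0$ sector your description of the crucial step is off in two respects. The paper does not Neumann-bracket at $r=1$; it imposes the single rank-one constraint $\varf(1)=0$ (equivalently $\om(0)=0$ in the $t$-variable), which is what produces the additive $+1$ and lands you directly on the Dirichlet half-line problems to which Proposition~\ref{diffop} applies. More importantly, the weak-type bound $\|\BT_G\|_{1,\w}\le C\|\gz(G)\|_{1,\w}$ is not obtained by a local Bargmann bound on each $D_k$ plus the quasi-triangle inequality; as the paper notes in Remark~\ref{rem3}, that route only recovers an $\ell_1$ estimate like \eqref{chad}. The actual proof (imported from \cite{BL}, Section~4) proceeds by real interpolation between an $\ell_1$-endpoint and a suitable second endpoint, which is why the constant $C$ in \eqref{estweak} is not explicit. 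Your sketch does not supply an argument at this level, and this is the genuine gap.
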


\begin{thm}\label{main2}
Let $d=2$, $V(x)=F(|x|)\ge0,\ G_F(t)$ be the potential \eqref{aux}, and let $\wh{\gz}(G_F)$ be the sequence \eqref{sum}. Then the Weyl asymptotic formula \eqref{as2} is satisfied if and only if $V\in L_1(\R^2)$ and $\wh{\gz}(G_F)\in\ell_{1,\w}^\circ$.
\end{thm}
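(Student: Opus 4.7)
The plan is to derive Theorem~\ref{main2} by combining Theorem~\ref{main} with a two-step approximation argument. For the \emph{sufficiency} direction, assume $V \in L_1(\R^2)$ and $\wh\gz(G_F) \in \ell_{1,\w}^\circ$. I truncate the potential at both scales: set $F_N = F \cdot \chi_{\{e^{-N} \le r \le e^N\}}$ and $\tilde F_N = F - F_N$. The first piece $F_N$ is supported on a compact subset of $(0,\w)$, so the weighted $L_1$ integrals appearing in \eqref{chad} are finite; by the standard deduction recalled just after \eqref{chad}, the Weyl asymptotic \eqref{as2} holds for $F_N$. For the tail $\tilde F_N$, the characterization of $\ell_{1,\w}^\circ$ as the closure of finitely supported sequences yields $\|\wh\gz(G_{\tilde F_N})\|_{1,\w} \to 0$ as $N \to \w$; combined with $\|\tilde F_N\|_{L_1} \to 0$ from dominated convergence, Theorem~\ref{main} then gives $N_-(\BH_{\a \tilde F_N}) \le 1 + \a\eta_N$ with $\eta_N \to 0$.

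The two pieces are combined by the elementary splitting
\[
N_-\bigl(-\D - \a(F_N + \tilde F_N)\bigr) \le N_-\bigl(-\D - (\a/t)F_N\bigr) + N_-\bigl(-\D - (\a/(1-t))\tilde F_N\bigr), \qquad t \in (0,1),
\]
valid since any trial function with negative energy for the sum must have it for at least one rescaled summand. Dividing by $\a$ and sending $\a \to \w$, $N \to \w$, $t \to 1$ in that order produces $\limsup \a^{-1} N_-(\BH_{\a V}) \le \tfrac12 \int_0^\w r F\, dr$; the matching lower bound is immediate from $F \ge F_N$ and the Weyl limit for $F_N$, after $N \to \w$.

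For \emph{necessity}, assume \eqref{as2} holds. Then $N_-(\BH_{\a V}) = O(\a)$, so Theorem~\ref{main} yields $V \in L_1$ and $\wh\gz(G_F) \in \ell_{1,\w}$; it remains to exclude the possibility $\D_1(\wh\gz(G_F)) = \vartheta > 0$. Supposing $\vartheta > 0$, for arbitrarily small $\vare > 0$ there are roughly $\vartheta/\vare$ dyadic indices $k_j$ with $\wh\z_{k_j}(G_F) > \vare$. Reducing to the weighted 1D problem on $\R$ via $r = e^t$ and isolating the small-$|m|$ angular sectors---where in $2D$ the absence of a Hardy-type inequality permits bound states not controlled by the local $L_1$-mass---one would build orthogonal blocks of trial functions on these heavy annuli that together force $\liminf \a^{-1} N_-(\BH_{\a V}) \ge \tfrac12 \int_0^\w r F\, dr + c\vartheta$ for some $c > 0$, contradicting \eqref{as2}.

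The hard part is exactly this necessity step: one has to convert the metric condition $\D_1(\wh\gz(G_F)) > 0$ into a quantitative lower bound on $\liminf \a^{-1} N_-(\BH_{\a V})$ that strictly exceeds the Weyl value. This is the sharp converse to \eqref{estweak} and demands enough precision in the partial-wave analysis to detect the $\ell_{1,\w} \setminus \ell_{1,\w}^\circ$ gap, which a purely $L_1$-based estimate would miss. The existence result of \cite{BL} (Theorem~5.1 there) showing that $N_-(\BH_{\a V}) = O(\a)$ can hold while \eqref{as2} fails is a strong indication that such a construction is possible; the task is to make it quantitative enough to match the $\ell_{1,\w}^\circ$ threshold exactly.
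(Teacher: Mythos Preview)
Your sufficiency argument via truncation $F=F_N+\tilde F_N$ and the variational splitting inequality is correct and would work, though it differs from the paper's route. The necessity direction, however, has a genuine gap that you yourself flag: you identify that one must show $\limsup_{\a\to\infty}\a^{-1}N_-(\BH_{\a V})>\tfrac12\int_0^\infty rF\,dr$ whenever $\D_1(\wh\gz(G_F))=\vartheta>0$, but you only sketch a trial-function strategy on ``heavy annuli'' and appeal to the existence result in \cite{BL} as circumstantial evidence. That sketch is not a proof; in particular, producing trial functions that capture the excess $c\vartheta$ \emph{in addition to} the full Weyl contribution, without double-counting, is exactly the delicate point you leave open.

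The paper closes this gap not by trial functions but by a structural observation you are circling around without exploiting. Because $V$ is radial, the Birman--Schwinger operator $\BB_V$ decomposes \emph{exactly} as an orthogonal sum $\BB_V=\BB_{V,0}\oplus\BB_{V,1}$ along the splitting of $\CH^1$ into radial functions (with $\varf(1)=0$) and angular-mean-zero functions; see Section~\ref{scpr}. This yields the identity $n_+(s,\BB_V)=n_+(s,\BB_{V,0})+n_+(s,\BB_{V,1})$ for every $s$, not merely an inequality. Lemma~\ref{bv1} shows that $s\,n_+(s,\BB_{V,1})\to\tfrac12 J(F)$ exactly, so the Weyl formula holds if and only if $\D_1(\BB_{V,0})=0$. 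After the substitution $r=e^t$, the radial piece $\BB_{V,0}$ is precisely the one-dimensional Birman--Schwinger operator $\BT_{G_F}$, and Proposition~\ref{diffop} (drawn from \cite{BL}, Section~4) supplies the two-sided estimate $c\,\D_1(\wh\gz(G_F))\le\D_1(\BB_{V,0})\le C\,\D_1(\wh\gz(G_F))$. The \emph{lower} bound here is exactly the quantitative converse you were missing, and once it is in hand both directions of Theorem~\ref{main2} drop out of the single formula in Theorem~\ref{genasy}. Your approximation scheme for sufficiency then becomes unnecessary as well.
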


\thmref{main2} is a particular case of a more general result (\thmref{genasy}) that allows also a non-Weyl type behavior of the function $N_-(\BH_{\a V})$. We present it in Subsection \ref{asbeh}.\vs

Now we comment on Theorems \ref{main}, \ref{main2}.

\begin{rem}\label{rem1}
The condition $\wh{\gz}(G_F)\in\ell_{1,\w}$ implies that $\int_0^\infty r F(r)dr<\infty$ and thus $V\in L_1(\R^2)$. Indeed,
for $k\in\N$ we have
\[ \wh{\z_k}(G_F)\ge e^{k-1}\int_{|t|\in D_k} G_F(t)dt.\]
Any sequence from $\ell_{1,\w}$ is bounded, and hence,
\[ \int_{|t|\in D_k}G_F(t)dt\le C e^{-(k-1)}.\]
It follows that the terms on the left form a convergent series. Its sum is equal to
\[ \int_\R e^{2t}F(e^t)dt=\int_0^\infty r F(r)dr,\]
and we are done.

So, for the radial potentials $V$ the inclusion $V\in L_1(\R^2)$ follows from the assumption $\wh{\gz}(G_F)\in\ell_{1,\w}$ and hence, could be omitted from the formulation of both theorems. Respectively, the estimate \eqref{estweak} could
be replaced by
\[ N_-(\BH_{\a V}) \le 1+C\a\|\wh{\gz}(G_F)\|_{1,\infty}.\]
However, we still consider it useful to mention the condition $V\in L_1(\R^2)$ explicitly.
\end{rem}

\begin{rem}\label{rem2}
Denote $G_F^\pm(t)=G_F(\pm t),\ t>0$, then $\wh{\gz}(G_F)=\gz(G_F^+)+\gz(G_F^-)$. The sequence
$\gz(G_F^+)$ controls the behavior of $F(r)$ as $r\to\infty$, and $\gz(G_F^-)$ does this as $r\to 0$. In this respect we would like to note that the auxiliary one-dimensional potential $G_F^+$ was introduced in the paper \cite{S-dim2}, where arbitrary (not necessarily radial) potentials $V$ were considered. The function $F$ was defined as
\[ F(r)=\frac1{2\pi}\int_0^{2\pi} V(r,\vart)d\vart,\]
where $r,\vart$ are the polar coordinates on $\R^2$. In \cite{S-dim2} some estimates for $N_-(\BH_{\a V})$ where obtained. They included the term $\|\gz(G_F^+)\|_{1,\w}$ as an important ingredient in the right-hand side. Independently, the same potential $G_F^+$ appeared in the paper \cite{BL}, where its role in the asymptotic formulas for $N_-(\BH_{\a V})$ was elucidated.
The potential $G_F^-$ did not appear in the formulations in \cite{S-dim2, BL}. This happened,
since other assumptions about $V$, stronger than just $V\in L_1$, already implied the necessary behavior of $F(r)$ as $r\to 0$. For instance, in \cite{BL} it was assumed that $V\in L_{\s,\loc}(\R^2)$ with some $\s>1$. Now, for the radial potentials, the assumption $V\in L_1$ is much weaker, and this makes it necessary to have an additional restriction on
the behavior of $F(r),\ r\to 0$.
 \end{rem}

 In the next three remarks we compare our results with those of \cite{cha}, and give some examples.
\begin{rem}\label{rem3}
By the definition \eqref{sum} and the inequality \eqref{compar}, we have
\begin{gather*}
\|\wh{\gz}(G_F)\|_{1,\w}\le \|\wh{\gz}(G_F)\|_{\ell_1}
=\int_{-1}^1 e^{2t}F(e^t)dt+\int_{|t|>1} |t|e^{2t} F(e^t)dt\\ =\int_0^\infty r F(r)\left(1+(|\ln r|-1)_+\right)dr.
\end{gather*}
It immediately follows that the estimate \eqref{estweak} is stronger than \eqref{chad}.\vs

Still, the bound \eqref{chad} has an evident merit, since it estimates the number $N_-(\BH_{\a V})$ in terms of some explicitly given integrals. In contrast to this, the derivation of \eqref{estweak} makes use of the real interpolation method, which does not allow to specify the optimal value of $C$ in this estimate.

In this respect we note that the factor $2/\sqrt3$ in \eqref{chad} can be removed. We show this
in Subsection \ref{sharp}. The possibility of this removal was formulated in \cite{cha} as a conjecture.
\end{rem}
\begin{rem}\label{rem4}
Consider the function
\[F(r)=\begin{cases} 0,& r\le e^{e^2};\\
r^{-2}(\ln r)^{-2}(\ln\ln r)^{-1},& r>e^{e^2}.
\end{cases}\]
Then, for $t>2$, we have $G_F(t)=t^{-2}(\ln t)^{-1}$, and hence, $\wh{\z}_k(G_F)=\ln{\frac{k}{k-1}}\asymp k^{-1}$ for large $k$. According to Theorems \ref{main},
\ref{main2},
for the potential $V(x)=F(|x|)$ we have $N_-(\BH_{\a V})=O(\a)$, but the asymptotic formula \eqref{as2} fails. The estimate \eqref{chad} does not apply to this example.\end{rem}
\begin{rem}\label{rem5}
In the previous example, let us multiply $F(r)$ by a bounded function $\psi(r)$ vanishing as
$r\to \infty$. For the resulting function $F\psi$ the corresponding sequence ${\wh{\gz}(G_{F\psi})}$ belongs to the space $\ell_{1,\w}^\circ$ and therefore, for the potential $V(x)=F(|x|)\psi(|x|)$
the estimate \eqref{estweak} and the asymptotic formula \eqref{as2} are satisfied. If $\psi(r)$ decays slowly enough, then
$\int_0^\infty rF(r)\psi(r)|\ln r|dr=\infty$, and again,
the estimate \eqref{chad} does not apply to the potential $F\psi$.
\end{rem}\vs

\medskip
\noindent
{\it Acknowledgements.}
The authors express their gratitude to G. Rozenblum and to the referees for valuable remarks .

\section{Auxiliary material}\label{auxres}
The proofs of our basic results mainly follow the line worked out in \cite{BL, S-dim2} and \cite{Lap}. Still, there are some distinctions, and we prefer to give an independent exposition. We systematically use the variational description of spectra.
\subsection{Classes $\Sg_1,\Sg_1^\circ$ of compact operators}\label{clas}
Along with the spaces $\ell_{1,\w},\ \ell_{1,\w}^\circ$ of number sequences, we need
the corresponding spaces of compact operators in the Hilbert space. If $\BT$ is such operator,
then, as usual, $\{s_n(\BT)\}$ stands for the sequence of its singular numbers, i.e., for the
eigenvalues of the non-negative, self-adjoint operator $(\BT^*\BT)^{1/2}$. We say that $\BT$ belongs to the class $\Sg_1$ if and only if $\{s_n(\BT)\}\in\ell_{1,\w}$, and to the class $\Sg_1^\circ$ if and only if $\{s_n(\BT)\}\in\ell_{1,\w}^\circ$. These are linear, quasinormed
spaces with respect to the quasinorm $\|\BT\|_{1,\w}$ induced by this definition. Evidently, $\Sg_1^\circ\subset\Sg_1$. The space $\Sg_1$ is non-separable, and $\Sg_1^\circ$ is its
separable subspace in which the finite rank operators form a dense subset. It is clear that
the trace class $\GS_1$ is contained in $\Sg_1^\circ$, and $\|\BT\|_{1,w}\le \|\BT\|_{\GS_1}$. Similarly to \eqref{Delta}, \eqref{delta}, we define
the functionals
\begin{equation}\label{del-op}
    \D_1(\BT)=\D_1(\{s_n(\BT)\}),\qquad \d_1(\BT)=\d_1(\{s_n(\BT)\}).
\end{equation}
The values of these functionals do not change if we add to $\BT$ any finite rank operator. Note also that
\begin{equation}\label{del-ineq}
    \d_1(\BT)\le \D_1(\BT)\le\|\BT\|_{1,\w}.
\end{equation}

In fact,  the spaces $\Sg_q,\Sg_q^\circ$ of compact operators are well-defined for all $q>0$,
see \cite{BSbook}, Section 11.6. However, in this paper we are dealing with $q=1$ only.

\subsection{Birman-Schwinger principle.}\label{B-Shw} As it is standard in this type of problems, our approach is based upon the classical Birman-Schwinger principle. In its general form, it was stated by Birman \cite{B1}.

Let $Q[u]$ be a densely defined, positive and closed quadratic form in a Hilbert space $\GH$. Suppose that a real-valued quadratic form $\bb[u]$ is non-negative and $Q$-bounded, that is,
\begin{equation}\label{1}
    \bb[u]\le CQ[u],\qquad u\in\dom Q.
\end{equation}
Along with $\GH$, consider another Hilbert space $\GH_Q$, namely the completion of $\dom Q$ with respect to the norm $\sqrt{Q[u]}$. Due to \eqref{1}, the quadratic form $\bb[u]$ extends by continuity to the whole of $\GH_Q$. Denote by $\BT_\bb$ the bounded operator in
$\GH_Q$, generated by this extended quadratic form. Consider also the family of quadratic forms in the original Hilbert space $\GH$, depending on the parameter $\a>0$:
\begin{equation}\label{2}
    Q_{\a\bb}[u]=Q[u]-\a b[u],\qquad \dom Q_{\a\bb}=\dom Q.
\end{equation}
\begin{prop}\label{birschw} $($Birman-Schwinger principle$)$. Under the above assumptions, suppose in addition that the operator $\BT_\bb$
is compact. Then for any $\a>0$ the quadratic form \eqref{2} is bounded from below and closed, the negative spectrum of the corresponding operator $\BA_\bb(\a)$ is finite, and the equality for the number of its negative eigenvalues is satisfied:
\begin{equation}\label{3}
    N_-(\BA_\bb(\a))=n_+(\a^{-1},\BT_\bb)=\#\{n:\l_n(\BT_\bb)
    >\a^{-1}\},\qquad\forall\a>0.
\end{equation}

Conversely, if the quadratic form $Q_{\a\bb}[u]$ is bounded from below and closed  and
the number $N_-(\BA_\bb(\a))$ is finite for all $\a>0$, then the operator $\BT_\bb$ is compact and \eqref{3} is satisfied.
\end{prop}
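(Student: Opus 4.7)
The plan is to reformulate the problem on the form Hilbert space $\GH_Q$, where both sides of the claimed identity admit a transparent description. By construction $Q[u] = \|u\|_{\GH_Q}^2$ on $\dom Q$, and the $Q$-boundedness of $\bb$ lets it extend continuously to a bounded non-negative quadratic form on $\GH_Q$ with representing operator $\BT_\bb$. The crucial identity is then
\[
Q_{\a\bb}[u] = \bigl((I - \a\BT_\bb)u, u\bigr)_{\GH_Q}, \qquad u \in \dom Q.
\]

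Assuming $\BT_\bb$ compact, fix $\a>0$ and split $\BT_\bb = \BT^{(F)} + \BT^{(R)}$ spectrally, where $\BT^{(F)}$ is the part on eigenvalues exceeding $(2\a)^{-1}$ (finite-rank on $\GH_Q$) and $\|\BT^{(R)}\|_{\GH_Q} \le (2\a)^{-1}$. Then
\[
Q_{\a\bb}[u] \ge \tfrac12 Q[u] - \a(\BT^{(F)}u, u)_{\GH_Q}, \qquad u \in \dom Q.
\]
The second term is bounded in $Q$-norm and supported on a finite-dimensional subspace of $\GH_Q$; combined with the positive bound $\tfrac12 Q[u]$, this yields closedness and boundedness from below of $Q_{\a\bb}$ on $\dom Q$, and hence a self-adjoint operator $\BA_\bb(\a)$. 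To establish \eqref{3} I would invoke Glazman's variational lemma: $N_-(\BA_\bb(\a))$ equals the supremum of $\dim L$ over subspaces $L\subset\dom Q$ with $Q_{\a\bb}[u]<0$, equivalently with $(\BT_\bb u,u)_{\GH_Q}>\a^{-1}\|u\|_{\GH_Q}^2$. By the spectral theorem the same supremum, taken over $L\subset\GH_Q$, equals $n_+(\a^{-1},\BT_\bb)$, and density of $\dom Q$ in $\GH_Q$ lets one realise this supremum inside $\dom Q$ with no loss of dimension.

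For the converse, assume $Q_{\a\bb}$ closed and semi-bounded with $N_-(\BA_\bb(\a))<\infty$ for every $\a>0$. Glazman's lemma combined with the same density argument bounds the dimension of any subspace $L\subset\GH_Q$ on which $(\BT_\bb u,u)_{\GH_Q}>\a^{-1}\|u\|_{\GH_Q}^2$. By the spectral theorem $\BT_\bb$ then has only finitely many spectral values above any positive threshold; its essential spectrum is therefore $\{0\}$ and, being bounded and self-adjoint, $\BT_\bb$ is compact. Equation \eqref{3} now follows from the first part.

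The main obstacle I anticipate is the density-plus-approximation step: the eigenvectors of $\BT_\bb$ live in $\GH_Q$ and need not belong to $\dom Q$, so transferring a finite-dimensional extremal negativity subspace between $\GH_Q$ and $\dom Q$ requires approximating those eigenvectors inside $\dom Q$ while exploiting the robustness of strict inequalities on finite-dimensional subspaces. Once that approximation is handled, everything else reduces to spectral theory of a compact self-adjoint operator plus Glazman's lemma.
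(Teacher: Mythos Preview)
The paper does not supply a proof of this proposition at all: it is stated as the classical Birman--Schwinger principle, attributed to Birman's 1961 paper \cite{B1}, and then used as a black box. So there is no ``paper's own proof'' to compare against; your outline is in fact the standard way one establishes the result, via the identity $Q_{\a\bb}[u]=\bigl((I-\a\BT_\bb)u,u\bigr)_{\GH_Q}$ together with Glazman's lemma and a density argument to transfer finite-dimensional negativity subspaces between $\dom Q$ and $\GH_Q$. The obstacle you single out---approximating eigenvectors of $\BT_\bb$ inside $\dom Q$ while preserving a strict quadratic inequality on a finite-dimensional span---is genuine but routine, handled by perturbing an $\GH_Q$-orthonormal basis of the extremal subspace and using that strict inequalities on finite-dimensional subspaces are stable under small perturbations.

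One technical point deserves slightly more care than your sketch gives it. Your argument for semiboundedness reads ``the finite-rank term, combined with the positive bound $\tfrac12 Q[u]$, yields boundedness from below''. But $(\BT^{(F)}u,u)_{\GH_Q}$ is controlled only by $Q[u]$, not by $\|u\|_\GH^2$, and in the abstract setting (and indeed in the paper's own application, where $\GH_Q$ is a homogeneous Sobolev-type space not embedded in $L_2(\R^2)$) one cannot simply trade $Q[u]$ for $\|u\|_\GH^2$. The clean way around this is: from $\|\BT^{(R)}\|\le(2\a)^{-1}$ the form $\a\bb^{(R)}$ is $Q$-bounded with relative bound $<1$, so by KLMN the form $Q-\a\bb^{(R)}$ is closed and non-negative on $\dom Q$ with form norm equivalent to $Q$; the remaining finite-rank piece $\a\bb^{(F)}$ is then a relatively bounded perturbation of a closed form, and one checks directly (or quotes the standard fact) that a finite-rank form perturbation preserves closedness and semiboundedness. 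This is a wrinkle in the write-up rather than a flaw in the strategy.
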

It follows directly from Proposition \ref{birschw} that the estimate
\begin{equation}\label{4}
    N_-(\BA_\bb(\a))\le C\a
\end{equation}
is equivalent to the inclusion $\BT_\bb\in\Sg_1$, and the sharp value of the constant $C$ in
\eqref{4} coincides with $\|\BT_\bb\|_{1,\w}$. Also, the asymptotic characteristics of
the function $N_-(\BA_\bb(\a))$ can be expressed in terms of the operator $\BT_\bb$:
\begin{gather*}
    \limsup_{\a\to\infty}\a^{-1}N_-(\BA_\bb(\a))=\D_1(\BT_\bb);\qquad
    \liminf_{\a\to\infty}\a^{-1}N_-(\BA_\bb(\a))=\d_1(\BT_\bb).
\end{gather*}\vs

The operator $\BT_\bb$ is usually called the Birman-Schwinger operator for the family of the quadratic forms $Q_{\a\bb}$, see \eqref{2}, or for the family of the corresponding operators $\BA_\bb(\a)$ in $\GH$.

\subsection{Reduction of the main problem to compact operators}\label{comp}
In the space $C_0^\infty=C_0^\infty(\R^2)$ let us introduce two subspaces, 
\begin{equation*}
    \CF_0=\{f\in C_0^\infty: f(x)=\varf(r), \varf(1)=0\};\qquad \CF_1=\{f\in C_0^\infty: \int_0^{2\pi}f(r,\vart)d\vart=0,\ \forall r>0\}.
\end{equation*}
Here $r,\vart$ stand for the polar coordinates on $\R^2$.

These subspaces are orthogonal both in the $L_2$-metric and in the metric of the Dirichlet integral. The Hardy inequalities have a different form on $\CF_0$ and on $\CF_1$:
\begin{multline}\label{H0}
    \int\frac{|f|^2}{|x|^2\ln^2|x|}dx=2\pi\int_0^\infty\frac{|\varf(r)|^2}{r\ln^2 r}dr\\
    \le\frac{\pi}2\int_0^\infty r|\varf'(r)|^2dr=\frac14\int|\nabla f|^2dx,\qquad f\in \CF_0;
\end{multline}
\begin{equation}\label{H1}
    \int\frac{|f|^2}{|x|^2}dx\le\int|\nabla f|^2dx,\qquad f\in \CF_1.
\end{equation}
Here and later on, the integral with no domain specified always means $\int_{\R^2}$.

For proving \eqref{H0}, one substitutes $r=e^t$, and then applies the standard Hardy inequality in dimension 1, i.e.,
\[\int_0^\infty\frac{|\psi(t)|^2}{t^2}dt< 4\int_0^\infty |\psi'(t)|^2dt\]
that is satisfied for any absolute continuous function $\psi\not\equiv0$ on $\R_+$, such that $\psi'\in L_2$ and $\psi(0)=0$;
see, e.g., \cite{hlp}.

The proof of \eqref{H1} is quite elementary, it can be found, e.g., in \cite{S-dim2},
or in \cite{BL}.

\begin{rem}\label{R}
Without the additional condition $\varf(1)=0$ the inequality \eqref{H0} fails. Instead of
this condition, it is possible to require $\varf(R)=0$
at any chosen point $R>0$. Then the term $\ln^2|x|$ in the first integral in \eqref{H0} should be
replaced by $\ln^2(|x|/R)$.
\end{rem}
\vs

 The difference between the inequalities \eqref{H1} and \eqref{H0} is reflected in the estimate \eqref{estweak}: as we shall see, the first term in brackets on the right-hand side of \eqref{estweak} is responsible for the estimates on the subspace $\CF_1$, while the second term is responsible for those on $\CF_0$.\vs

 Let us consider the completions $\CH^1_0, \CH^1_1$ of the spaces $\CF_0,\CF_1$ in the metric of the Dirichlet integral. It follows from the Hardy inequalities \eqref{H0}, \eqref{H1} that these are Hilbert function spaces, embedded into the weighted $L_2$, with the weights defined by these inequalities. Consider also their orthogonal sum
  \begin{equation}\label{ortsum}
    \CH^1=\CH_0^1\oplus\CH_1^1.
 \end{equation}
 An independent definition of this Hilbert space is
 \begin{equation*}
    \CH^1=\{f\in H^1_{\loc}(\R^2): \int_0^{2\pi}f(1,\vart)d\vart=0,\ |\nabla f|\in L_2(\R^2)\},
 \end{equation*}
 with the metric of the Dirichlet integral.

 Later we will need also the spaces $\CG_0, \CG_1$ which are the completions of $\CF_0,\CF_1$ in
 the $L_2$-metric. Note that the condition $\varf(1)=0$, occuring in the description of $\CF_0$, disappears for general $f\in\CG_0$.

 Suppose that $V\ge0$ is a measurable function, such that
 \begin{equation}\label{qform}
    \bb_V[u]:=\int V|u|^2dx\le C\int|\nabla u|^2dx,\qquad\forall u\in\CH^1.
 \end{equation}
 Under the assumption \eqref{qform} the quadratic form $\bb_V$ defines a bounded
 self-adjoint operator $\BB_V\ge0$ in $\CH^1$. If (and only if) this operator is compact, then, by the Birman-Schwinger principle, the quadratic form
 \begin{equation*}
    \int(|\nabla u|^2-\a V|u|^2)dx
 \end{equation*}
 with the form-domain $\{u\in H^1(\R^2): \int_0^{2\pi}u(1,\vart)d\vart=0\}$
 is closed and bounded from below for each $\a>0$, the negative spectrum of the associated self-adjoint operator $\wt{\BH}_{\a V}$ on $L_2(\R^2)$ is finite, and the following equality for the number of its negative eigenvalues holds true:
 \begin{equation}\label{bshw}
    N_-(\wt{\BH}_{\a V})=n_+(\a^{-1},\BB_V),\qquad\forall\a>0.
 \end{equation}

 Now, let us withdraw the rank one condition $\int_0^{2\pi}u(1,\vart)d\vart=0$ from the description of the form-domain. Then the resulting quadratic form corresponds to the \sh operator $\BH_{\a V}$. Hence,
 \[ N_-(\wt{\BH}_{\a V})\le N_-(\BH_{\a V})\le N_-(\wt{\BH}_{\a V})+1,\]
 and, by \eqref{bshw},
 \begin{equation}\label{equal}
    n_+(\a^{-1},\BB_V)\le N_-(\BH_{\a V})\le n_+(\a^{-1},\BB_V)+1.
 \end{equation}
 Thus, the study of the quantity $N_-(\BH_{\a V})$ for all $\a>0$ is reduced to the investigation of the "individual" operator $\BB_V$.
 \subsection{Orthogonal decomposition of the operator $\BB_V$.}\label{scpr}
 Given a function $u\in\CH^1$, we agree to standardly denote its components in the decomposition \eqref{ortsum} by $\varf(r),v(r,\vart)$. Along with the quadratic form $\bb_V$, we consider its "parts" in the subspaces $\CH_0^1,\CH_1^1$:
 \begin{equation}\label{parts}
    \bb_{V,0}[u]=\bb_V[\varf],\qquad \bb_{V,1}[u]=\bb_V[v].
 \end{equation}
 Let $\BB_{V,j},\ j=0,1,$ stand for the corresponding self-adjoint operators in $\CH_j^1$. Using the orthogonal decomposition \eqref{ortsum}, we see that
 \begin{equation}\label{dec}
    \bb_V[u]=\bb_{V,0}[\varf]+\bb_{V,1}[v]+2\int F(|x|)\re(\varf(|x|)\overline{v(x)})dx.
\end{equation}
 Here the last term vanishes, $\int F(|x|)\re(\varf(|x|)\overline{v(x)})dx=0$, due to the orthogonality of $v$ (in $L_2$) to all functions depending
 on $|x|$. So, the decomposition \eqref{ortsum} diagonalizes the quadratic form $\bb_V$:
 \begin{equation}\label{dec1}
    \bb_V[u]=\bb_{V,0}[\varf]+\bb_{V,1}[v].
\end{equation}
 Equivalently, we have
 \begin{equation}\label{dec2}
    \BB_V=\BB_{V,0}\oplus\BB_{V,1},
 \end{equation}
 whence
 \begin{equation}\label{countfunc}
    n_+(s,\BB_V)= n_+(s,\BB_{V,0})+n_+(s,\BB_{V,1}),\qquad \forall s>0.
 \end{equation}
 This immediately shows that the proof of both theorems \ref{main} and \ref{main2} is reduced to the independent study of  the two operators $\BB_{V,j},\ j=0,1$.
It also shows that for the radial potentials the subspaces $\CG_0,\CG_1$ reduce the Hamiltonian
\eqref{sh} to the diagonal form.

\section{Operator $\BB_{V,0}$. }\label{dim1}
The Rayleigh quotient that corresponds to the operator $\BB_{V,0}$ is
\[\frac{\int_0^\infty r F(r)|\varf(r)|^2dr}{\int_0^\infty r|\varf'(r)|^2dr},\qquad \varf(1)=0.\]
The standard substitution $r=e^t,\ \varf(r)=\om(t);\ t\in\R$, reduces it to the form
\begin{equation}\label{ray-ax}
    \frac{\int_\R G_F(t)|\om(t)|^2dt}{\int_\R|\om'(t)|^2dt},\qquad \om(0)=0.
\end{equation}
where $G_F(t)=e^{2t}F(e^t)$.
Due to the boundary condition at the point $t=0$, the corresponding operator decomposes into the direct orthogonal sum of two operators, each acting on a half-line. In particular, the one for the half-line $\R_+$ is
nothing but the Birman-Schwinger operator for the family \eqref{1d}, with $G=G_F$. The spectral estimates for such operators were studied in detail in \cite{BL}, Section 4. Below we present only those results of this study that we need in this paper.
\begin{prop}\label{diffop}
Let $G(t)\ge 0$ be a function on $\R_+$, integrable on
each finite interval $(0,A)$, and let $\BT_G$ be the Birman-Schwinger operator that corresponds to the family \eqref{1d}. Assume that $\gz(G)$ is the sequence defined in \eqref{def}.
The condition $\gz(G)\in\ell_{1,\w}$ is necessary and sufficient for the inclusion
$\BT_G\in\Sg_1$. Moreover, the estimates are satisfied:
\begin{gather}
    \|\BT_G\|_{1,\w}\le C\|\gz(G)\|_{1,\w}\label{est1};\\
    c\D_1(\gz(G))\le \D_1(\BT_G)\le C\D_1(\gz(G))\label{est2}.
\end{gather}
\end{prop}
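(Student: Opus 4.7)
The plan is to realize $\BT_G$ concretely via the Birman--Schwinger reduction, split the potential dyadically along the intervals $D_k$, and combine the resulting blocks by a rearrangement argument.

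First, the positive operator $\BT_G$ associated with the form $\bb_G[\varf]=\int_0^\infty G|\varf|^2\,dt$ on the Hilbert space $\GH_Q$ --- the completion of $\{\varf\in C_0^\infty(\R_+):\varf(0)=0\}$ under $\|\varf'\|_{L_2}$ --- has, by the standard Birman--Schwinger reduction, the same nonzero eigenvalues as the positive integral operator on $L_2(\R_+)$ with kernel $\sqrt{G(x)G(y)}\min(x,y)$, where $\min(x,y)$ is the Dirichlet Green's function of $-d^2/dt^2$ on $\R_+$. Since $\bb_G$ is linear in $G$, the decomposition $G=\sum_{k\ge 0}G_k$ with $G_k=G\chi_{D_k}$ yields the operator identity $\BT_G=\sum_{k\ge 0}\BT_{G_k}$ on $\GH_Q$.

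Second, for each block $k$ I would establish the twin bounds
\[ \|\BT_{G_k}\|\le C\z_k(G),\qquad \tr\BT_{G_k}\le C\z_k(G). \]
The operator-norm bound follows from the Cauchy--Schwarz estimate $|\varf(t)|^2\le t\,\|\varf'\|_{L_2}^2$ (valid for $\varf(0)=0$), which gives $\bb_{G_k}[\varf]\le\int_{D_k}tG(t)\,dt\cdot\|\varf'\|_{L_2}^2$, equal to $\z_k(G)\|\varf'\|_{L_2}^2$ for $k\ge 1$ and bounded by $\z_0(G)\|\varf'\|_{L_2}^2$ on $D_0$ via $t\le 1$. The trace identity $\tr\BT_{G_k}=\int_{D_k}tG(t)\,dt$ is immediate from the integral-kernel realization. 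Hence each $\BT_{G_k}$ is a positive trace-class operator whose top eigenvalue and whose full trace are both $\le C\z_k(G)$; it is essentially rank one.

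Third, to combine the blocks I would exploit that, in the $L_2$ realization, the kernels of $\BT_{G_k}$ are supported on pairwise disjoint squares $D_k\times D_k$. A Birman--Solomyak-type rearrangement argument --- essentially, ordering the single effective eigenvalue of each block in nonincreasing order and absorbing the rapidly decaying tails via the trace bound --- produces $\|\{s_n(\BT_G)\}\|_{1,\w}\le C\|\gz(G)\|_{1,\w}$, which is \eqref{est1}, and by passage to $\limsup_{n\to\infty}$ the upper bound in \eqref{est2}. For the reverse inequality I would test against mutually Dirichlet-orthogonal bumps $\om_k$ of height $1$ supported on the middle of $D_k$, with $\|\om_k'\|_{L_2}^2\asymp e^{-k}$ and $\bb_G[\om_k]\ge c\int_{D_k}G(t)\,dt\asymp e^{-k}\z_k(G)$; the variational principle then gives $n_+(\vare,\BT_G)\ge\#\{k:\z_k(G)>c\vare\}$, which proves both the necessity of $\gz(G)\in\ell_{1,\w}$ for $\BT_G\in\Sg_1$ and the left-hand inequality in \eqref{est2}.

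The main obstacle is the combining step. A naive summation of the block bounds gives only $\|\BT_G\|_{\GS_1}\le\sum_k\|\BT_{G_k}\|_{\GS_1}\le C\|\gz(G)\|_{\ell_1}$, a strictly stronger hypothesis than weak $\ell_1$; to gain the necessary factor of $n$ and arrive at \eqref{est1}, one must exploit the near-disjointness of the block supports carefully. An alternative route is real interpolation between the trace-class endpoint and a compact-operator endpoint obtained by a covering argument, which yields \eqref{est1} cleanly but does not identify the sharp constant $C$.
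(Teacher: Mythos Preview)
The paper does not actually prove this proposition; it is quoted from \cite{BL}, Section~4. Your outline is in the same spirit as the argument there: a dyadic decomposition of $G$ into blocks $G_k=G\chi_{D_k}$, the twin bounds $\|\BT_{G_k}\|\le C\z_k(G)$ and $\|\BT_{G_k}\|_{\GS_1}\le C\z_k(G)$, a combining step to pass from $\ell_1$ to $\ell_{1,\w}$, and trial functions for the lower bound.

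Two points in your write-up need repair. First, you conflate two different decompositions. The identity $\BT_G=\sum_k\BT_{G_k}$ holds in $\GH_Q$, but this sum is \emph{not} orthogonal there. In the $L_2(\R_+)$ realization the operators $\BK_{G_k}$ with kernels $\sqrt{G_k(x)}\min(x,y)\sqrt{G_k(y)}$ do live on disjoint squares $D_k\times D_k$, but $\BK_G\neq\bigoplus_k\BK_{G_k}$: the off-diagonal blocks $\sqrt{G_j(x)}\min(x,y)\sqrt{G_k(y)}$ for $j\neq k$ are missing. So the ``disjoint squares'' observation does not by itself give a rearrangement of the singular values of $\BT_G$. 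The real-interpolation route you mention at the end is indeed the clean way out, and this is the mechanism behind the proof in \cite{BL}; one interpolates between the trace-class endpoint $\|\BT_G\|_{\GS_1}\le C\|\gz(G)\|_{\ell_1}$ and a suitable second endpoint, rather than trying to splice the block spectra directly.

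Second, your lower-bound trial functions are too small. If $\om_k$ is supported in the \emph{middle} of $D_k$, then $\bb_G[\om_k]$ only sees $G$ on that middle portion, and the inequality $\bb_G[\om_k]\ge c\int_{D_k}G$ fails when $G$ concentrates near the endpoints of $D_k$. The standard remedy is to take $\om_k\equiv1$ on all of $D_k$, with linear ramps on the adjacent intervals; to restore Dirichlet-orthogonality one then restricts to (say) even $k$, which costs only a harmless factor in the constants. With these two fixes your sketch becomes a correct proof.
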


We remind that the functional $\D_1$ for a number sequence was defined in \eqref{Delta},
and for a compact operator in \eqref{del-op}. Note that the behavior of the eigenvalues of the operator $\BT_G$ under the assumptions of Proposition \ref{diffop} may be quite irregular, and only a few examples are known where one has $\l_n(\BT_G)\sim c_0 n^{-1}$ with some $c_0>0$. This is the reason why we describe this behavior in terms of the functionals $\D_1,\d_1$. Note also that, unlike \eqref{est2}, the estimate \eqref{est1} cannot be inverted, since the term $\z_0(G)$ is not controlled by $\|\gz(G)\|_{1,\w}$.

It is clear that the result extends to the case of the whole
line (but under the condition $\om(0)=0$). The only difference is that instead of the sequence
$\gz(G)$ one should consider the sequence $\wh{\gz}(G)$ defined by \eqref{sum}.

So, we arrive at the following
\begin{lem}\label{B0}
Let a function $F(r)\ge0$ be integrable on each finite interval $(a,A)\subset(0,\infty)$, and let $G_F(t)=e^{2t}F(e^t),\ t\in\R$. The operator $\BB_{V,0}$ belongs to the class $\Sg_1$ if and only if the sequence $\wh{\gz}(G_V)$ defined by \eqref{sum} lies in the space $\ell_{1,\w}$, and the following estimates are satisfied:
\begin{gather*}
    \|\BB_{V,0}\|_{1,\w}\le C\|\wh{\gz}(G_F)\|_{1,\w};\label{estB01}\\
    c\D_1(\wh{\gz}(G_F))\le \D_1(\BB_{V,0})\le C\D_1(\wh{\gz}(G_F)).
\end{gather*}
\end{lem}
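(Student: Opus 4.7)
The plan is to reduce the radial problem defining $\BB_{V,0}$ to two independent one-dimensional Birman--Schwinger problems on $\R_+$ and then invoke Proposition~\ref{diffop} on each. The substitution $r=e^t$, $\varf(r)=\om(t)$ exhibited in \eqref{ray-ax} rewrites the Rayleigh quotient of $\BB_{V,0}$ as the one-dimensional Birman--Schwinger quotient on $\R$ for the potential $G_F$ with the Dirichlet condition $\om(0)=0$. Because of this boundary condition the quadratic form decouples across $t=0$: writing $\om_\pm$ for the restrictions of $\om$ to $\R_\pm$, there is no cross term, and $\BB_{V,0}$ becomes unitarily equivalent to the orthogonal sum $\BT_{G_F^+}\oplus\BT_{G_F^-}$, where $G_F^\pm(t)=G_F(\pm t)$ on $\R_+$ and the summand on $\R_-$ has been transplanted to $\R_+$ by the reflection $t\mapsto -t$, which leaves both the kinetic integral and the potential integral invariant.

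With this decomposition in hand I would apply Proposition~\ref{diffop} to each $\BT_{G_F^\pm}$ (the local-integrability hypothesis of that proposition follows at once from the local integrability of $F$ on $(0,\infty)$), obtaining
\[
\|\BT_{G_F^\pm}\|_{1,\w}\le C\|\gz(G_F^\pm)\|_{1,\w},\qquad c\D_1(\gz(G_F^\pm))\le \D_1(\BT_{G_F^\pm})\le C\D_1(\gz(G_F^\pm)).
\]
For the direct sum the counting function is exactly additive, $n_+(s,\BB_{V,0})=n_+(s,\BT_{G_F^+})+n_+(s,\BT_{G_F^-})$. The elementary inclusion $\{n:a_n+b_n>2\vare\}\subseteq\{n:a_n>\vare\}\cup\{n:b_n>\vare\}$ supplies a quasi-triangle inequality, with a universal constant, for both $\|\cdot\|_{1,\w}$ and $\D_1$; for non-negative sequences the reverse inequality is free, since $a,b\ge 0$ forces $(a+b)^*_n\ge\max(a^*_n,b^*_n)$. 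I would apply these comparisons twice: once to transfer the half-line operator bounds to $\BB_{V,0}$, and once on the sequence side to identify $\gz(G_F^+)+\gz(G_F^-)$ with $\wh\gz(G_F)$ via Remark~\ref{rem2}.

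Chaining the two sets of comparabilities yields both two-sided estimates asserted in the lemma, and the characterisation $\BB_{V,0}\in\Sg_1\iff\wh\gz(G_F)\in\ell_{1,\w}$ follows because a direct sum of compact operators lies in $\Sg_1$ precisely when each of its summands does. The main point of care is of a bookkeeping nature: the universal constants produced by the two quasi-triangle applications propagate through the chain, which is precisely why the lemma is stated up to unspecified constants $c,C$ rather than with sharp values. There is no genuine analytic obstacle beyond the change of variable, the Dirichlet splitting at $t=0$, and the half-line results already contained in Proposition~\ref{diffop}.
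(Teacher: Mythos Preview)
Your proposal is correct and follows essentially the same route as the paper: reduce $\BB_{V,0}$ via $r=e^t$ to the one-dimensional Birman--Schwinger problem on $\R$ with Dirichlet condition at $t=0$, split into two half-line problems $\BT_{G_F^\pm}$, and apply Proposition~\ref{diffop} to each. You spell out the quasi-triangle bookkeeping for combining the two halves, which the paper leaves implicit in the sentence ``It is clear that the result extends to the case of the whole line,'' but the underlying argument is identical.
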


\section{Operator $\BB_{V,1}$. Proof of \thmref{main}.}\label{prmain}
\subsection{Operator $\BB_{V,1}$. }\label{B1}
Our goal here is to establish the following result.
\begin{lem}\label{bv1}
Let $F(r)\ge0$. Then $\BB_{V,1}\in\Sg_1$ if and only if
$J(F):=\int_0^\infty rF(r)dr<\infty$. Under this condition we have
\begin{equation}\label{b1est}
    \|\BB_{V,1}\|_{1,\w}\le J(F)
\end{equation}
and
\begin{equation*}
    \D_1(\BB_{V,1})=\d_1(\BB_{V,1})=\frac{J(F)}2,
\end{equation*}
or, in other words, the following limit exists,
\begin{equation}\label{b1as}
    \lim_{s\to 0} sn_+(s,\BB_{V,1})=\frac12\int_0^\infty rF(r)dr.
\end{equation}
\end{lem}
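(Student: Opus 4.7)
The plan is a two-step reduction followed by a quantitative Lieb--Thirring argument. First, angular Fourier decomposition $v(r,\vart) = \sum_{m\ne 0} v_m(r) e^{im\vart}$ combined with the substitution $r = e^t$ makes $\BB_{V,1}$ unitarily equivalent to the orthogonal sum $\bigoplus_{m\in\Z\setminus\{0\}}\BT_m$, where
\[
\BT_m = G_F^{1/2}(-\p_t^2 + m^2)^{-1}G_F^{1/2}
\]
on $L_2(\R)$ is the Birman--Schwinger operator for the 1D \sh family $-\p_t^2 + m^2 - \a G_F$. Its explicit kernel $(2|m|)^{-1}\exp(-|m|\,|t-\tau|)G_F^{1/2}(t)G_F^{1/2}(\tau)$ gives $\tr\BT_m = J(F)/(2|m|)$, so each $\BT_m$ is trace class, and Proposition \ref{birschw} applied modewise yields $n_+(s,\BB_{V,1}) = \sum_{m\ne 0} N_-(-\p_t^2 + m^2 - s^{-1}G_F)$.

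The key re-summation is to re-order by the eigenvalues of the single 1D operator. Let $\{\nu_k(s)\}_{k\ge 1}$ be the absolute values of the negative eigenvalues of $-\p_t^2 - s^{-1}G_F$; then $\nu_k(s)$ contributes to the $m$-th term exactly when $m^2 < \nu_k(s)$, so
\[
n_+(s,\BB_{V,1}) = \sum_k \#\{m \in \Z\setminus\{0\} : m^2 < \nu_k(s)\},
\]
with the inner count sandwiched between $2(\sqrt{\nu_k(s)}-1)_+$ and $2\sqrt{\nu_k(s)}$. The sharp 1D Lieb--Thirring inequality at $\g = 1/2$ of Hundertmark--Lieb--Thomas, $\sum_k \sqrt{\nu_k(s)} \le (1/2)\int s^{-1}G_F\,dt = J(F)/(2s)$, applied to the upper side of the sandwich yields $s\,n_+(s,\BB_{V,1}) \le J(F)$ uniformly in $s$, which is \eqref{b1est}.

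The Weyl asymptotic \eqref{b1as} uses the sharp $1/2$-moment Weyl formula $\sum_k \sqrt{\nu_k(s)} \sim J(F)/(4s)$ (with semiclassical constant $L^{cl}_{1/2,1} = 1/4$) together with a truncation argument. For $G_F^{(M)} := G_F\,\chi_{\{|t|\le M,\ G_F\le M\}}$, bounded and compactly supported, the correction term $\sum_k \min(\sqrt{\nu_k^{(M)}(s)}, 1)$ is bounded by $N_-(-\p_t^2 - s^{-1}G_F^{(M)}) = O(s^{-1/2})$ since $\sqrt{G_F^{(M)}} \in L_1$, so the sandwich tightens to $s\,n_+(s,\BB_{V^{(M)},1}) \to J(F^{(M)})/2$. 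Monotonicity $\BB_{V^{(M)},1} \le \BB_{V,1}$ together with $J(F^{(M)}) \uparrow J(F)$ then gives $\d_1(\BB_{V,1}) \ge J(F)/2$. For the matching upper bound, split $\BB_{V,1} = \BB_{V^{(M)},1} + \BB_{V^{(M^c)},1}$ and apply Ky Fan's inequality $n_+(s_1+s_2,A+B) \le n_+(s_1,A)+n_+(s_2,B)$ with $s_1 = (1-\theta)s$, $s_2 = \theta s$ to obtain
\[
\D_1(\BB_{V,1}) \le \frac{J(F^{(M)})/2}{1-\theta} + \frac{\int G_F^{(M^c)}\,dt}{\theta};
\]
choosing $\theta = (\int G_F^{(M^c)}\,dt)^{1/2}$ and letting $M\to\w$, so that $\int G_F^{(M^c)} \to 0$ by dominated convergence, forces $\D_1(\BB_{V,1}) \le J(F)/2$. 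Combined with the lower bound this gives $\D_1(\BB_{V,1}) = \d_1(\BB_{V,1}) = J(F)/2$. The two-scale balance in this last step is the main technical point. Necessity of $J(F) < \w$ for $\BB_{V,1} \in \Sg_1$ follows contrapositively from the same monotone approximation: if $J(F) = \w$, then $\d_1(\BB_{V,1}) \ge J(F^{(M)})/2 \to \w$, contradicting $\BB_{V,1} \in \Sg_1$.
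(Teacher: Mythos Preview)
Your proof is correct. For the estimate \eqref{b1est} you do exactly what the paper does: separate variables after the substitution $r=e^t$, re-sum over angular modes to get $n_+(s,\BB_{V,1})\le 2\sum_k\sqrt{\nu_k(s)}$, and apply the sharp Hundertmark--Lieb--Thomas inequality at $\gamma=1/2$. The paper phrases this via $N_-(\BH_{V,1})$ rather than via the explicit Birman--Schwinger operators $\BT_m$, but the computation is the same.

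The asymptotic part is where the two proofs diverge. The paper does not work inside the 1D model at all: it first establishes \eqref{b1as} for $F\in C_0^\infty(0,\infty)$ by quoting Theorem~5.1 of \cite{BL} (which already gives the 2D Weyl law for such potentials, and for which the $\CH_0^1$-component contributes nothing to the coefficient), and then passes to general $F$ with $J(F)<\infty$ by approximation, invoking the continuity of the functionals $\D_1,\d_1$ in the $\Sg_1$-quasinorm (Theorem~11.6.6 in \cite{BSbook}). Your route is more self-contained: you stay inside the 1D reduction, use the Weyl asymptotic $\sum_k\sqrt{\nu_k(s)}\sim L^{\rm cl}_{1/2,1}\,s^{-1}J(F^{(M)})$ for the truncated potential together with the bound $N_-=O(s^{-1/2})$ to control the lattice-point correction, and then run an explicit Ky~Fan splitting $\BB_{V,1}=\BB_{V^{(M)},1}+\BB_{V^{(M^c)},1}$ with a balanced choice of $\theta$. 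This avoids the external citations to \cite{BL} and \cite{BSbook} at the cost of importing the 1D semiclassical moment asymptotic; it also makes the mechanism behind the coefficient $1/2$ (namely $2L^{\rm cl}_{1/2,1}$) completely transparent. The necessity argument in both proofs is the same monotone-approximation idea.
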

\begin{proof}
What is given below, is a slight modification of the original argument in \cite{Lap}. A direct use
of the estimate \eqref{laprad} would give a result similar to
\eqref{b1est}, but with an excessive numerical factor.

Let $\BH_{\a V,1}$ stand for the restriction of the Hamiltonian \eqref{sh} to the
subspace $\CG_1$. Since the potential is radial, this subspace is invariant for $\BH_{\a V,1}$. Note that the operator $\BB_{V,1}$ is just the Birman-Schwinger operator for the family $\BH_{\a V,1}$.

It is enough to deal with the value $\a=1$, and
it is more convenient here to estimate the number $N_-(\BH_{V,1})$ directly, rather than
via evaluating the quasinorm $\|\BB_{V,1}\|_{1,\infty}$. Let us consider the
corresponding quadratic form which is
\begin{equation}\label{hv}
    \bh_{V,1}[v]=\int(|\nabla v|^2-F(|x|)|v|^2)dx=\int_0^\infty\int_0^{2\pi}\left(|v'_r|^2+
    r^{-2}|v'_\vart|^2-F(r)|v|^2\right)rdrd\vart.
    \end{equation}
By setting $r=e^t,\ w(t,\vart)=v(e^t,\vart)$, we reduce it to
\begin{equation}\label{hv1}
    \wt{\bh}_{V,1}[w]=
    \int_\R\int_0^{2\pi}\left(|w'_t|^2+|w'_\vart|^2-G_F(t)|w|^2\right)dtd\vart.
\end{equation}
Let $-\mu_k,\ \mu_k>0,$ stand for the eigenvalues of the \sh operator $-\psi''(t)-G_F(t)\psi(t)$
on the line.
Separation of variables shows that the negative spectrum of the operator generated by
the quadratic form \eqref{hv1}, and hence the one of
$\BH_{V,1}$, consists of the eigenvalues $-(\mu_k-n^2)$, where $n\in \Z\setminus\{0\},\ k\in\N$,
and $n^2<\mu_k$. For each $k$, the number of all admissible values of $n$ does not exceed $2\mu_k^{1/2}$, and therefore
\[ N_-(\BH_{V,1})\le 2\sum_{k\in\N}\mu_k^{1/2}.\]
Now, using the Lieb-Thirring inequality in the dimension $1$ for the borderline value of the exponent (which is $1/2$), with the sharp constant $1/2$, see \cite{hun}, we
conclude that
\begin{equation}\label{est-hun}
    N_-(\BH_{V,1})\le \int_\R G_F(t)dt=J(F).
\end{equation}
This is equivalent to \eqref{b1est}.\vs

To justify the asymptotic formula \eqref{b1as}, we consider first the case $F\in C_0^\infty(0,\infty)$. Then Theorem 5.1 in \cite{BL} applies and it shows that
\[ 2\lim_{\a\to 0}\a^{-1} N_+(\BH_{\a V})=J(F).\]
By the Birman-Schwinger principle, this is equivalent to
\begin{equation}\label{asB1}
    2sn_+(s,\BB_V)\to J(F),\qquad s\to 0.
\end{equation}
The spectrum of $\BB_{V,1}$ has the same asymptotic behavior, since for such potentials the subspace $\CH^1_0$ does not contribute to the asymptotic coefficient.

Now, let $F\ge0$ be an arbitrary function, such that $J(F)<\infty$. Then, approximating it by functions $F\in C_0^\infty(0,\infty)$ and taking into account the continuity of the asymptotic coefficients in the metric of $\Sg_1$ (see \cite{BSbook}, Theorem 11.6.6), we extend the formula \eqref{asB1} to any $F$ under consideration. This concludes the proof of the "if" part of Lemma.

Suppose now that $F\ge0$ and $\BB_V\in\Sg_1$. Take any bounded and compactly supported function $\wt F$, such that $0\le\wt F\le F$, then for the potential $\wt V(x)=\wt F(|x|)$ we
obtain from the variational principle:
\[ 2\lim_{s\to0}sn_+(s,\BB_{\wt V})=\int_0^\infty r\wt F(r)dr\le 2\|\BB_V\|_{1,\w}.\]
By Fatou's lemma and the inequality \eqref{del-ineq}, this yields $\int_0^\infty r F(r)dr\le  \|\BB_V\|_{1,\w}$.
\end{proof}

\subsection{Proof of \thmref{main}.}\label{pr}
We are now able to prove \thmref{main}. To establish \eqref{estweak}, we use \eqref{equal} and \eqref{countfunc}.  Then we estimate  $n_+(s,\BB_{V,0})$ using Lemma \ref{B0}, and  we estimate $n_+(s,\BB_{V,1})$ using Lemma \ref{bv1}. This leads directly to \eqref{estweak}, and gives the proof of the sufficiency part of Theorem.

The necessity part follows immediately from the evident inequality

\[ \max\left(\|\BB_{V,0}\|_{1,\w}, \|\BB_{V,1}\|_{1,\w}\right)\le  \|\BB_V\|_{1,\w}.\]

\section{Some complementary results}\label{compl}
Here we state and prove the general result on the asymptotic behavior of $N_-(\BH_{\a V})$,
promised in Introduction, and show that the constant factor $2/\sqrt3$ in the estimate \eqref{chad} can be removed.
\subsection{ Asymptotic behavior of $N_-(\BH_{\a V})$ in the general case.}\label{asbeh}
Here we prove
\begin{thm}\label{genasy}
Under the assumptions of \thmref{main} one has
\begin{gather*}
\limsup_{\a\to\infty}\a^{-1}N_-(\BH_{\a V})=\frac12\int_0^\infty r F(r)dr+\D_1(\BT_{G_F}),\\
\liminf_{\a\to\infty}\a^{-1}N_-(\BH_{\a V})=\frac12\int_0^\infty r F(r)dr+\d_1(\BT_{G_F}).
\end{gather*}
\end{thm}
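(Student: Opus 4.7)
My plan is to reduce the statement to the spectral asymptotics of the two Birman--Schwinger operators $\BB_{V,0}$ and $\BB_{V,1}$, which have already been analyzed in Sections 3--4, and then exploit the fact that one of the two asymptotic quantities is actually a genuine limit.

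By \eqref{equal} one has $\alpha^{-1}N_-(\BH_{\alpha V}) = \alpha^{-1}n_+(\alpha^{-1},\BB_V)+O(\alpha^{-1})$, so that
\[
\limsup_{\alpha\to\infty}\alpha^{-1}N_-(\BH_{\alpha V})=\D_1(\BB_V),\qquad \liminf_{\alpha\to\infty}\alpha^{-1}N_-(\BH_{\alpha V})=\d_1(\BB_V).
\]
Using the orthogonal decomposition \eqref{dec2} and the additivity of the counting function \eqref{countfunc}, I can write
\[
sn_+(s,\BB_V)=sn_+(s,\BB_{V,0})+sn_+(s,\BB_{V,1}),\qquad s>0.
\]
The key observation is that \lemref{bv1} asserts the existence of the limit $\lim_{s\to 0}sn_+(s,\BB_{V,1})=\tfrac12 J(F)$. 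Whenever one summand has a limit, the $\limsup$ and $\liminf$ of the sum distribute over the two terms, yielding
\[
\D_1(\BB_V)=\D_1(\BB_{V,0})+\tfrac12 J(F),\qquad \d_1(\BB_V)=\d_1(\BB_{V,0})+\tfrac12 J(F).
\]

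Thus the only remaining issue is to identify $\D_1(\BB_{V,0})$ and $\d_1(\BB_{V,0})$ with $\D_1(\BT_{G_F})$ and $\d_1(\BT_{G_F})$, respectively. For this I would revisit the change of variable $r=e^t$, $\varphi(r)=\omega(t)$ used in Section 3 to pass from the Rayleigh quotient attached to $\BB_{V,0}$ to the quotient \eqref{ray-ax}. This substitution induces a unitary equivalence (after also matching the corresponding $L_2$--spaces via the weight $r\,dr\leftrightarrow dt$) between $\BB_{V,0}$ and the Birman--Schwinger operator associated to the one--dimensional family $-\omega''-\alpha G_F\omega$ on $\R$ with the condition $\omega(0)=0$; by the extension of Proposition~\ref{diffop} recorded at the end of Section~3, the latter is precisely $\BT_{G_F}$. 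Hence $s_n(\BB_{V,0})=s_n(\BT_{G_F})$ for all $n$, and consequently $\D_1(\BB_{V,0})=\D_1(\BT_{G_F})$, $\d_1(\BB_{V,0})=\d_1(\BT_{G_F})$.

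The main obstacle is the bookkeeping in the last step: making the unitary equivalence $\BB_{V,0}\cong\BT_{G_F}$ fully precise, because $\BB_{V,0}$ is defined intrinsically on the Dirichlet--type space $\CH^1_0$ while $\BT_{G_F}$ lives on the Sobolev space of functions vanishing at the origin on $\R$. Once this identification is done, the conclusion is immediate on combining the displayed formulas above.
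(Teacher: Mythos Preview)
Your proposal is correct and follows essentially the same route as the paper: the paper's proof is a one-liner invoking the decomposition \eqref{dec2} together with Lemmas~\ref{B0} and~\ref{bv1}, and you have simply spelled out the details (the additivity of $\limsup/\liminf$ when one summand has a genuine limit, and the identification $\BB_{V,0}\cong\BT_{G_F}$). The ``obstacle'' you flag is not really one: the substitution $r=e^t$, $\varphi(r)=\omega(t)$ in Section~\ref{dim1} already identifies the two Rayleigh quotients, which is all that is needed to conclude $s_n(\BB_{V,0})=s_n(\BT_{G_F})$.
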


It is clear that \thmref{main2} is just a special case of \thmref{genasy}, when $\D_1(\wh{\gz}(G_F))=0$ and hence, $\D_1(\BT_{G_F})=\d_1(\BT_{G_F})=0$.

\begin{proof} The result immediately follows from
the equality \eqref{dec2}, in view of Lemmas \ref{B0}
and \ref{bv1}.
\end{proof}

\subsection{Sharpening of the estimate \eqref{chad}}\label{sharp}
We restrict ourselves to the case $R=1$. To handle the case of general $R$, it is sufficient
to take into account Remark \ref{R} in Subsection \ref{comp}.
\begin{thm} Let both integrals in \eqref{chad} be finite. Then
\begin{equation}\label{chad1}
    N_-(\BH_{\a V})\le 1+\a\int_0^\infty rF(r)\bigl|\ln r\bigr|dr+\a \int_0^\infty rF(r)dr.
\end{equation}
\end{thm}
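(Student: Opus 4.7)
The plan is to parallel the decomposition-based approach of Section~\ref{prmain} but to use sharp constants throughout: the Hundertmark-sharp Lieb--Thirring bound of \lemref{bv1} for the non-radial component, and the classical Bargmann estimate on a half-line for the radial component. No interpolation enters, which is exactly what removes the $2/\sqrt3$ factor from \eqref{chad}.

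First I would apply the reduction from Subsections~\ref{comp}--\ref{scpr}: combining \eqref{equal} with \eqref{countfunc} yields
\[ N_-(\BH_{\a V})\le 1+n_+(\a^{-1},\BB_{V,0})+n_+(\a^{-1},\BB_{V,1}). \]
For the non-radial component I would invoke \lemref{bv1} (estimate \eqref{est-hun}, rescaled by the coupling $\a$) to obtain
\[ n_+(\a^{-1},\BB_{V,1})\le \a\int_0^\infty rF(r)\,dr, \]
which supplies the second integral on the right-hand side of \eqref{chad1}.

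For the radial component $\BB_{V,0}$, I would follow the substitution $r=e^t$ from Section~\ref{dim1}, reducing it to the one-dimensional \sh form on $\R$ with Dirichlet condition $\om(0)=0$. The key observation is that this boundary condition decouples the operator $-\partial_t^2-\a G_F(t)$ into the orthogonal direct sum of its restrictions to $\R_-$ and to $\R_+$, each with a Dirichlet condition at the origin. On each half-line I would apply the classical Bargmann bound with sharp constant $1$: for $-u''-\a W(t)u$ on $(0,\infty)$ with $u(0)=0$,
\[ N_-\le \a\int_0^\infty tW(t)\,dt. \]
Summing the two half-line contributions and reversing the substitution yields
\[ n_+(\a^{-1},\BB_{V,0})\le \a\int_\R|t|G_F(t)\,dt=\a\int_0^\infty rF(r)|\ln r|\,dr, \]
so the three displayed estimates combine directly into \eqref{chad1}.

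The main technical point I expect is to justify that the Dirichlet condition $\om(0)=0$ indeed decouples the one-dimensional operator on $\R$ into two independent Dirichlet half-line problems (it does, since the boundary value at $t=0$ is simultaneously the right endpoint of $\R_-$ and the left endpoint of $\R_+$), and to cite the Bargmann bound in its sharp form. Both are classical, so the argument itself should be very short; the removal of the excess factor $2/\sqrt3$ is a consequence of using the right sharp inequality on each of the two orthogonal components rather than a single all-purpose estimate.
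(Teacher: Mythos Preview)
Your proposal is correct and follows essentially the same approach as the paper's own proof: decompose via \eqref{equal} and \eqref{countfunc}, bound $n_+(\a^{-1},\BB_{V,1})$ by \lemref{bv1} (the sharp Lieb--Thirring constant from \cite{hun}), and bound $n_+(\a^{-1},\BB_{V,0})$ by applying the Bargmann estimate to each of the two half-line Dirichlet problems arising from the condition $\om(0)=0$. Your write-up is in fact a bit more explicit than the paper's, which compresses all of this into a few lines.
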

\begin{proof}
By Lemma \ref{bv1}, we have
\[n_+(s,\BB_{V,1})\le J(F)s^{-1}.\]
For $\BB_{V,0}$ we apply the classical Bargmann estimate, (see, e.g., \cite{ReedSim4} to each of the operators  corresponding to the Rayleigh quotient \eqref{ray-ax}. This gives
\[ n_+(s,\BB_{V,1})\le \int_\R |t|G_F(t)dt\, s^{-1}=\int_0^\infty rF(r)|\ln r|dr\, S^{-1}.\]
Now we conclude from \eqref{dec2} that
\[ n_+(s,\BB_V)\le(J(F)+ \int_\R |t|G_F(t)dt) s^{-1}.\]
Taking  \eqref{equal} into account, we arrive at the desired estimate \eqref{chad1}.
\end{proof}


\end{document}